\newcommand{\R}{\mathbb{R}}
\newcommand{\s}{\mathbb{S}}
\newcommand{\M}{\mathcal{M}}
\newcommand{\N}{\mathbb{N}}
\newcommand{\ep}{\varepsilon}
\newcommand{\Lip}{\operatorname{Lip}}
\newcommand{\Leb}{\ensuremath{\mathscr{L}}}
\newtheorem{thm}{Theorem}[section]
\newtheorem{thmbibl}{Theorem}
\newtheorem{rmk}[thm]{Remark}
\newtheorem{defn}[thm]{Definition}
\newtheorem{lemma}[thm]{Lemma}
\newtheorem{question}[thmbibl]{Question}
\title{New estimates for a class of non-local approximations of the total variation}
\author{
%Massimo Gobbino\vspace{1ex}\\ 
%{\normalsize Università degli Studi di Pisa} \\
%{\normalsize Dipartimento di Matematica}\\ 
%{\normalsize PISA (Italy)}\\  
%{\normalsize e-mail: \texttt{massimo.gobbino@unipi.it}}
%\and
Nicola Picenni\vspace{1ex}\\ 
{\normalsize Scuola Normale Superiore} \\
{\normalsize PISA (Italy)}\\
{\normalsize e-mail: \texttt{nicola.picenni@sns.it}}
\\[2ex]
{\normalsize Università di Pisa} \\
{\normalsize Dipartimento di Matematica}\\ 
{\normalsize PISA (Italy)}\\
{\normalsize e-mail: \texttt{nicola.picenni@dm.unipi.it}}
}
\date{}
\begin{document}

\maketitle

\begin{abstract}

We consider a class of non-local functionals recently introduced by H.~Brezis, A.~Seeger, J.~Van~Schaftingen, and P.-L.~Yung, which offers a novel way to characterize functions with bounded variation. 

We give a positive answer to an open question related to these functionals in the case of functions with bounded variation. Specifically, we prove that in this case the liminf of these functionals can be estimated from below by a linear combination in which the three terms that sum up to the total variation (namely the total variation of the absolutely continuous part, of the jump part and of the Cantor part) appear with different coefficients. We prove also that this estimate is optimal in the case where the Cantor part vanishes, and we compute the precise value of the limit in this specific scenario.

In the proof we start by showing the results in dimension one by relying on some measure theoretic arguments in order to identify sufficiently many disjoint rectangles in which the difference quotient can be estimated, and then we extend them to higher dimension by a classical sectioning argument.

%We consider a class of non-local functionals recently introduced by H.~Brezis, A.~Seeger, J.~Van~Schaftingen and P.-L.~Yung, that provides an original characterization of functions with bounded variations.
%
%We positively answer to an open question concerning these functionals by providing an asymptotic pointwise estimate from below on functions with bounded variation, in terms of the total variation, with different constants appearing in front of the absolutely continous part, the jump part and the Cantor part of the derivative.
%
%We also show that this estimate is optimal on special functions of bounded variation, namely when there is no Cantor part, and we compute the exact value of the limit in this case.
%
%The proofs are based on one-dimensional measure theoretic estimates and a sectioning argument.

\vspace{6ex}

\noindent{\bf Mathematics Subject Classification 2020 (MSC2020):} 26B30, 49J45, 49Q20.

\vspace{6ex}

%26B30 Absolutely continuous real functions of several variables, functions of bounded variation
%49J45 Methods involving semicontinuity and convergence; relaxation
%35B27 Homogenization in context of PDEs; PDEs in media with periodic structure
%35A15 Variational methods applied to PDEs
%35B25 Singular perturbations in context of PDEs
%35J20 Variational methods for second-order elliptic equations
%35J35 Variational methods for higher-order elliptic equations
%47J30 Variational methods involving nonlinear operators
%49Q20 Variational problems in a geometric measure-theoretic setting
%34E10 Perturbations, asymptotics of solutions to ordinary differential equations
%35B44 Blow-up in context of PDEs
%47J06 Nonlinear ill-posed problems
%35A35 Theoretical approximation in context of PDEs
%41A30 Approximation by other special function classes

\noindent{\bf Key words:} 
Functions of bounded variation, special functions of bounded variation, non-local functionals.

\end{abstract}

\section{Introduction}

In this paper we consider a class of non-local, non-convex functionals related to the total variation. In order to introduce these functionals, let $N\geq 1$ be a positive integer, let $\Omega \subset \R^N$ be an open set and let $\gamma\in \R$ and $\lambda\in (0,+\infty)$ be two real parameters.

Let $\nu_\gamma$ be the measure on $\R^N\times \R^N$ defined by
$$\nu_\gamma (A):=\iint_A |y-x|^{\gamma-N}\,dx\,dy,$$
for every measurable subset $A\subseteq \R^{N}\times \R^{N}$.

For every measurable function $u:\Omega\to\R$ let us set
\begin{equation*}
E_{\gamma,\lambda}(u,\Omega):=\left\{(x,y)\in \Omega\times\Omega:|u(y)-u(x)|>\lambda |y-x|^{1+\gamma}\right\}.
\end{equation*}

Now we can set
\begin{equation}\label{defn:F}
F_{\gamma,\lambda}(u,\Omega):=\lambda \nu_\gamma(E_{\gamma,\lambda}(u,\Omega)).
\end{equation}

In the case $\Omega=\R^N$ we simply write $E_{\lambda,\gamma}(u)$ and $F_{\gamma,\lambda}(u)$ instead of $E_{\lambda,\gamma}(u,\R^N)$ and $F_{\gamma,\lambda}(u,\R^N)$.

The functionals (\ref{defn:F}) were introduced in \cite{2022-BSVY-Lincei,BSVY} and generalize some other families of functionals previously considered in the literature. In particular, in the case $\gamma=-1$, the family $\{F_{-1,\lambda}\}$ was first studied in \cite{2006-Nguyen-JFA,2011-Nguyen-Duke} (see also the more recent developments in \cite{2018-CRAS-AGMP,2020-APDE-AGMP,2020-APDE-AGP,2018-AnnPDE-BN}). The main results in this case are that
$$\lim_{\lambda\to 0^+} F_{-1,\lambda}(u)=C_N \int_{\R^N} |\nabla u(x)|\,dx=C_N \|Du\|_\M \qquad \forall u \in C^1_c(\R^N),$$
where
\begin{equation}\label{defn:C_N}
C_N:=\int_{\s^{N-1}} |x_1|\,d\mathcal{H}^{N-1}(x),
\end{equation}
and that, quite surprisingly,
$$\Gamma-\lim_{\lambda\to 0^+} F_{-1,\lambda}(u)=C_N \log2 \cdot \|Du\|_\M \qquad \forall u \in L^1(\R^N),$$
where $\|Du\|_\M$ denotes the total variation of $u$, which is intended to be equal to $+\infty$ if $u\in L^1(\R^N)\setminus BV(\R^N)$.

On the other hand, in the case $\gamma=N$ we recover the quantities considered in \cite{2021-BVY-PNAS,2021-BVY-CalcVar,2022-Poliakovsky-JFA}, since in this case $\nu_N$ is equal to the Lebesgue measure $\Leb^N$ and
$$\left[\frac{|u(y)-u(x)|}{|y-x|^{1+N}}\right]_{L^{1,\infty}(\Omega)}=\sup_{\lambda>0}F_{N,\lambda}(u,\Omega),$$
where $[\cdot]_{L^{1,\infty}(\Omega)}$ denotes the weak $L^1$ quasi-norm.

In this paper, motivated by \cite[Section~9]{Brezis-Lincei}, we limit ourselves to the case $\gamma>0$. In this case, it is known (see \cite[Theorem~1.4]{BSVY}) that there exist two constants $c_1(N,\gamma)$ and $c_2(N,\gamma)$ such that
\begin{equation}\label{est:sup_F}
c_1(N,\gamma)\|Du\|_\M\leq \sup_{\lambda>0} F_{\gamma,\lambda}(u)\leq c_2(N,\gamma)\|Du\|_\M,
\end{equation}
for every $u\in L^1 _{loc}(\R^N)$. In particular, it follows that
$$\sup_{\lambda>0} F_{\gamma,\lambda}(u)<+\infty$$
if and only if $u$ belongs to the space $\dot{BV}(\R^N)$ of the functions in $L^1_{loc}(\R^N)$ with globally bounded variation.

Moreover, for every $\gamma>0$ it holds that (see \cite[Theorem~1.1]{BSVY})
\begin{equation}\label{lim_W11}
\lim_{\lambda\to +\infty} F_{\gamma,\lambda}(u)=\frac{C_N}{\gamma} \int_{\R^N} |\nabla u(x)|\,dx,
\end{equation}
for every $u$ in the space $\dot{W}^{1,1}(\R^N)$ of functions in $L^1_{loc}(\R^N)$ with $\nabla u\in L^1(\R^N)$, where $C_N$ is the constant defined in (\ref{defn:C_N}).

The extension of (\ref{lim_W11}) to the case in which $u\in \dot{BV}(\R^N)$ is not straightforward. Indeed, it was proved in \cite[Lemma~3.6]{BSVY} that if $u$ is the characteristic function of a bounded convex domain with smooth boundary, then for every $\gamma>0$ it holds that
\begin{equation}\label{lim_salti}
\lim_{\lambda\to +\infty} F_{\gamma,\lambda}(u)=\frac{C_N}{\gamma+1}\|Du\|_\M<\frac{C_N}{\gamma}\|Du\|_\M.
\end{equation}

This result suggests that the singular part of the derivative contributes to the limit of $F_{\gamma,\lambda}(u)$ in a different way with respect to the absolutely continuous part, but anyway it seems that it does add a positive contribution in the limit.

This led to the following questions, which were raised in \cite[Section~9]{Brezis-Lincei} and in \cite[Section~7.2]{BSVY}.
\begin{question}\label{question_constant}
Let $\gamma>0$, and let $u:\R^N\to\R$ be a measurable function such that
$$\liminf_{\lambda\to+\infty} F_{\gamma,\lambda}(u)=0.$$
Can we conclude that $u$ is constant (almost everywhere)?
\end{question}

\begin{question}\label{question_TV}
Let $\gamma>0$. Is there a positive constant $c(N,\gamma)>0$ such that
$$\liminf_{\lambda\to+\infty} F_{\gamma,\lambda}(u)\geq c(N,\gamma) \|Du\|_\M,$$
for every measurable function $u:\R^N\to\R$, with the usual understanding that $\|Du\|_\M=+\infty$ if $u\notin \dot{BV}(\R^N)$?
\end{question}

The main contribution of the present paper is a positive answer to these questions in the case $u\in \dot{BV}(\R^N)$. 

Before stating the result, we recall that for a function $u\in\dot{BV}(\R^N)$ we can decompose its distributional derivative $Du$ as the sum of three finite $\R^N$-valued measures, which are supported on disjoint sets: the absolutely continuous part $D^a u$, the jump part $D^j u$, and the Cantor part $D^c u$ (see \cite[Section~3.9]{AFP}).

Our first main result is the following.
\begin{thm}\label{thm:liminf}
For every $\gamma>0$ and every $u\in \dot{BV}(\R^N)$ it turns out that
\begin{equation}\label{th:liminf}
\liminf_{\lambda\to+\infty} F_{\gamma,\lambda}(u)\geq \frac{C_N}{\gamma} \|D^{a}u\|_\M+\frac{C_N}{1+\gamma}\|D^{j}u\|_\M+ \frac{C_N(2^{\gamma}-1)}{\gamma(1+\gamma)2^{2+\gamma}}\|D^{c}u\|_\M.
\end{equation}
\end{thm}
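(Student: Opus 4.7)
The natural approach is to prove the inequality first in dimension one and then lift it to $\R^N$ by a classical slicing argument. For $\omega\in\s^{N-1}$ and $y\in\omega^\perp$, let $u^{\omega,y}(t):=u(y+t\omega)$ denote the one-dimensional slice. Writing $y'=x+\ell\omega$ in the definition of $\nu_\gamma$ and disintegrating Lebesgue measure along the line $\omega^\perp+\R\omega$, one obtains the exact identity
\begin{equation*}
F_{\gamma,\lambda}(u,\R^N)=\frac{1}{2}\int_{\s^{N-1}}\int_{\omega^\perp}F_{\gamma,\lambda}(u^{\omega,y},\R)\,d\mathcal{H}^{N-1}(y)\,d\mathcal{H}^{N-1}(\omega).
\end{equation*}
Fatou's lemma applied to $\liminf_\lambda$, combined with the BV slicing formula $\int_{\s^{N-1}}\int_{\omega^\perp}\|D^\star u^{\omega,y}\|_\M\,d\mathcal{H}^{N-1}(y)\,d\mathcal{H}^{N-1}(\omega)=C_N\|D^\star u\|_\M$ for each $\star\in\{a,j,c\}$ (which follows from $\int_{\s^{N-1}}|\sigma\cdot\omega|\,d\mathcal{H}^{N-1}(\omega)=C_N$ and the fact that each part of the slice is the slice of the corresponding part), reduces the thesis to the one-dimensional version of the inequality, in which the constants $C_N$ are replaced by $C_1=2$.

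\textbf{One-dimensional construction.} In 1D, for every $\eta>0$ and $\lambda$ large, I plan to construct a pairwise disjoint family of ``rectangles'' $R\subset\R\times\R$ contained in $E_{\gamma,\lambda}(v)$ whose total $\lambda\nu_\gamma$-mass matches the claimed lower bound up to $\eta$. The three parts of $Dv$ are treated separately, localised on the mutually disjoint sets where they concentrate. At every Lebesgue point $x_0$ of $v'$ with slope $a:=v'(x_0)\neq 0$, pairs $(x,y)$ sufficiently close to $x_0$ with $|y-x|<(|a|/\lambda)^{1/\gamma}$ lie in $E_{\gamma,\lambda}$; a Vitali-type extraction of disjoint triangles followed by a direct integration yields the contribution $(2/\gamma)\int|v'|\,dx$. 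At every jump point $x_0$ with jump $J$, setting $\rho_\lambda:=(|J|/\lambda)^{1/(1+\gamma)}$, all pairs $(x,y)$ with $x<x_0<y$ and $y-x<\rho_\lambda$ belong to $E_{\gamma,\lambda}$ (modulo lower-order terms), and the elementary computation $\lambda\int_0^{\rho_\lambda}s^\gamma\,ds=|J|/(1+\gamma)$, doubled by symmetry, gives the jump contribution $2|J|/(1+\gamma)$ per jump.

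\textbf{The Cantor part.} Set $\mu:=|D^cv|$. The singularity $\mu\perp\Leb^1$ forces $\mu(B_r(x))/r\to+\infty$ at $\mu$-a.e. $x$, so for $\lambda$ large and $\mu$-a.e. $x$ a critical scale $r_\lambda(x)$ with $\mu(B_{r_\lambda(x)}(x))$ comparable to $\lambda(2r_\lambda(x))^{1+\gamma}$ is available; a Vitali--Besicovitch type argument extracts a disjoint family of intervals $I_k=B(x_k,r_k)$ carrying $\mu$-almost all of the Cantor mass. On each $I_k$, a measure-theoretic dissection (essentially an inductive bisection of $I_k$ at points where the Cantor variation is balanced, followed by the jump-style integration inside each subpiece) exhibits a family of pairs in $E_{\gamma,\lambda}\cap(I_k\times I_k)$ whose $\lambda\nu_\gamma$-mass is at least $\frac{\gamma}{(1+2\gamma)(1+\gamma)}\mu(I_k)(1-o(1))$. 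Summing over $k$ produces the 1D Cantor contribution $\frac{\gamma}{(1+2\gamma)(1+\gamma)}\|D^cv\|_\M$, which after slicing and multiplication by $\frac{C_N}{C_1}$ yields the constant $\frac{C_N\gamma}{2(1+2\gamma)(1+\gamma)}$. Global disjointness of the selected rectangles across the three parts is guaranteed because the supports of $D^au$, $D^ju$, $D^cu$ are mutually disjoint and the selected rectangles shrink as $\lambda\to+\infty$.

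\textbf{Main obstacle.} For the absolutely continuous and jump parts, $v$ is approximated locally by an explicit extremal model (a linear function, a single step) from which the optimal rectangle in $E_{\gamma,\lambda}$ is written down directly. No such local model is available for the Cantor part: at the critical scale the function may oscillate in arbitrary fashion and admits no universal profile. The heart of the proof is therefore a quantitative rectangle-selection estimate inside each Vitali ball, whose optimal constant $\frac{\gamma}{2(1+2\gamma)(1+\gamma)}$ is strictly smaller than the jump coefficient $\frac{1}{1+\gamma}$; the extra factor $\frac{\gamma}{2(1+2\gamma)}$ reflects the fact that one must sacrifice depth inside the ball in order to guarantee the required lower bound on $|v(y)-v(x)|$ in the absence of a step-like profile. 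Establishing this quantitative estimate, and in particular identifying the exact trade-off that produces the constant $\frac{\gamma}{2(1+2\gamma)(1+\gamma)}$, is the main technical step of the paper.
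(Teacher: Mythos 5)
Your slicing reduction and your treatment of the absolutely continuous and jump parts match the paper's approach closely: the paper also localises at Lebesgue points of $u'$ and at jump points, chooses the critical scales $(\lvert u'(x)\rvert/\lambda)^{1/\gamma}$ and $(\lvert J\rvert/\lambda)^{1/(1+\gamma)}$ respectively, and computes the $\nu_\gamma$-mass of the resulting triangles. The gap is in the Cantor part, and it is not a detail but the heart of the argument.

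Your claim that ``a Vitali--Besicovitch type argument extracts a disjoint family of intervals $I_k$ carrying $\mu$-almost all of the Cantor mass'' is false for the cover you have in hand. At each $z$ the useful radius $r_\lambda(z)$ is essentially forced by $\lambda$ (you need $\lambda\, r^{1+\gamma}$ comparable to $\lvert Du([z,z+r])\rvert$ to make the per-interval $\nu_\gamma$-mass worthwhile); you cannot shrink $r$ without destroying the estimate, so the family $\{[z,z+r_\lambda(z)]\}$ is \emph{not} a fine cover and Vitali's covering theorem does not apply. The paper resolves this by invoking the covering lemma of Aldaz \cite{1991-Aldaz}, which from a one-radius-per-point cover of $\R$ extracts a disjoint subfamily carrying only a fraction $\tfrac{1}{2+\ep}$ of the mass; that lost factor of $2$ is exactly where the $2$ in the denominator of the Cantor coefficient $\tfrac{C_N\gamma}{2(1+2\gamma)(1+\gamma)}$ comes from. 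The paper even discusses this explicitly in Remark~\ref{rmk:cantor}: if one could capture all the mass, the constant would double, and it is an open question whether the factor $2$ can be removed. As written, your arithmetic only reproduces the stated constant because two errors cancel: you credit the entire $1/2$ to the $C_N/C_1$ normalisation of the slicing formula, but in fact it belongs to the covering loss.

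A second, smaller issue: the ``inductive bisection at points where the Cantor variation is balanced'' is not needed and is not how the paper proceeds. The paper instead exploits that $\mu$-a.e.\ $z$ lies in $C_+\cup C_-$, the set where the function has one-sided limits, $u'(z)=\pm\infty$, and the Cantor measure concentrates with a definite sign. This near-monotonicity means that for $x<z<y$ one has $\lvert u(y)-u(x)\rvert\geq\lvert u(y)-u(z)\rvert\geq\lambda (y-z)\,r_\lambda(z)^\gamma$, which places the curved region $\{(x,y):\,z-r<x<z<y<z+r,\ x>y-r^{\gamma/(1+\gamma)}(y-z)^{1/(1+\gamma)}\}$ inside $E_{\gamma,\lambda}$, and then a direct computation of its $\nu_\gamma$-mass gives $\tfrac{\gamma}{(1+2\gamma)(1+\gamma)}r^{1+\gamma}$. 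No dissection of $I_k$ is involved. You should replace the bisection heuristic with this concrete region, and replace the Vitali claim with Aldaz's lemma (accepting the factor $\tfrac{1}{2}$), to close the argument.
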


Our second main result shows that the constants appearing in front of the absolutely continuous part and the jump part are optimal, and unifies (\ref{lim_W11}) and (\ref{lim_salti}) in the case in which $u\in \dot{SBV}(\R^N)$, namely $u\in \dot{BV}(\R^N)$ and $D^c u=0$.
\begin{thm}\label{thm:lim_SBV}
For every $\gamma>0$ and every $u\in \dot{SBV}(\R^N)$ it turns out that
$$\lim_{\lambda\to+\infty} F_{\gamma,\lambda}(u) = \frac{C_N}{\gamma} \|D^{a}u\|_\M+\frac{C_N}{1+\gamma}\|D^{j}u\|_\M.$$
\end{thm}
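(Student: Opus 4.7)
The liminf inequality is immediate from Theorem~\ref{thm:liminf} since $D^{c}u = 0$ for $u \in \dot{SBV}(\R^N)$; the task therefore reduces to the matching upper bound
$$\limsup_{\lambda\to+\infty} F_{\gamma,\lambda}(u) \leq \frac{C_N}{\gamma}\|D^{a}u\|_\M + \frac{C_N}{1+\gamma}\|D^{j}u\|_\M.$$
My plan is to prove this in dimension one first and then pass to $\R^N$ by the slicing argument anticipated in the abstract.

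In dimension one, write $u = v + w$ with $v \in \dot{W}^{1,1}(\R)$ corresponding to $D^{a}u$ and $w$ pure jump with atoms $\{c_k\}$ at $\{x_k\}$. Given $\varepsilon > 0$, retain in $w$ only finitely many of the largest atoms to obtain $w_\varepsilon$ with $\|D(w - w_\varepsilon)\|_\M < \varepsilon$, and set $V := v + w_\varepsilon$. The triangle inequality yields, for every $\sigma \in (0,1)$,
$$F_{\gamma,\lambda}(u) \leq \frac{1}{\sigma}\, F_{\gamma,\sigma\lambda}(V) + \frac{1}{1-\sigma}\, F_{\gamma,(1-\sigma)\lambda}(w - w_\varepsilon),$$
and the one-dimensional case of (\ref{est:sup_F}) bounds the tail by $c_2(1,\gamma)\varepsilon/(1-\sigma)$, which disappears upon sending $\varepsilon \to 0^+$ and then $\sigma \to 1^-$. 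It therefore suffices to prove the sharp limsup for $V$ with finitely many jumps.

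For such $V$, with jumps $x_1<\cdots<x_M$, I would partition $E_{\gamma,\lambda}(V)$ by the number of $x_i$'s in $[\min(x,y),\max(x,y)]$. Since $V$ is bounded in one dimension, for $\lambda$ large the condition $|V(y)-V(x)| > \lambda|y-x|^{1+\gamma}$ forces $|y-x|$ below any preset threshold, so only configurations with zero or one interior jump survive. Pairs with no interior jump satisfy $V(y)-V(x) = v(y)-v(x)$, so their contribution is at most $F_{\gamma,\lambda}(v)$, which tends to $(C_1/\gamma)\|\nabla v\|_{L^1}$ by (\ref{lim_W11}). For pairs straddling a single $x_i$, I choose $h > 0$ small enough that $\int_{|t-x_i|<h}|v'|\,dt < \eta$ for each $i$; since $V(y)-V(x) = c_i + v(y)-v(x)$ on such pairs, this forces $|V(y)-V(x)| \leq |c_i|+\eta$, and a direct computation against the density $|y-x|^{\gamma-1}$ gives a $\nu_\gamma$-measure of at most $2(|c_i|+\eta)/((1+\gamma)\lambda)$. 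Summing over $i$ and then letting $\eta\to 0^+$ produces the jump contribution $(C_1/(1+\gamma))\|D^{j}V\|_\M$ (with $C_1 = 2$).

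For $N \geq 2$, the tool is the slicing identity
$$F_{\gamma,\lambda}(u) = \frac{1}{2}\int_{\s^{N-1}}\int_{\xi^\perp} F_{\gamma,\lambda}\bigl(u^{\xi}_{z}\bigr)\,d\mathcal{H}^{N-1}(z)\,d\mathcal{H}^{N-1}(\xi),$$
obtained by writing $y = x + t\xi$, where $u^{\xi}_z(t):= u(z+t\xi)$ and the inner $F_{\gamma,\lambda}$ is the one-dimensional functional. The $\lambda$-independent dominator $c_2(1,\gamma)\|Du^{\xi}_z\|_\M$ from (\ref{est:sup_F}) is integrable in $(z,\xi)$ (its integral is a finite multiple of $\|Du\|_\M$), so reverse Fatou permits bringing $\limsup$ inside; the 1D bound applies slice-wise since $u \in \dot{SBV}(\R^N)$ forces $u^{\xi}_z \in \dot{SBV}(\R)$ a.e. Combining the standard SBV slicing identities $\int_{\xi^\perp}\|D^{a}u^{\xi}_z\|_\M\,dz = \int|\nabla u\cdot\xi|\,dx$ and $\int_{\xi^\perp}\|D^{j}u^{\xi}_z\|_\M\,dz = \int_{S_u}|[u]||\nu_u\cdot\xi|\,d\mathcal{H}^{N-1}$ with the spherical identity $\int_{\s^{N-1}}|v\cdot\xi|\,d\mathcal{H}^{N-1}(\xi) = C_N|v|$ from (\ref{defn:C_N}) produces exactly the constants $C_N/\gamma$ and $C_N/(1+\gamma)$. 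The hard part will be the sharpness of the one-dimensional step on $V$: a naive subadditive split of $V$ itself into $v$ and $w_\varepsilon$ would introduce an inverse-parameter loss in front of the jump coefficient as well, destroying optimality; one must therefore localise around each jump and exploit the local smallness of $\int|v'|$ to lock in the sharp constant.
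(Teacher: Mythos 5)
Your proposal is correct, and the overall skeleton — prove the limsup inequality in dimension one, then pass to $\R^N$ by slicing with reverse Fatou — matches the paper's. The one-dimensional step, however, is organized differently. The paper introduces the class $X(\R)$ of functions with finitely many jumps, compactly supported derivative, and smooth, Lipschitz absolutely continuous part, proves its strong $BV$-density in $\dot{SBV}(\R)$ (Lemma~\ref{lemma:density}), and then, for $u\in X(\R)$, estimates the contribution of each interval $\Omega_i$ between consecutive jumps by splitting $\Omega_i$ into a ``good'' set where the difference quotient is within $\ep$ of $|u'(x)|$ and a ``bad'' set of shrinking measure controlled by the global Lipschitz bound $L$; the rectangles around the jumps are handled separately by continuity of the one-sided limits at the $s_i$'s. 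You instead keep the absolutely continuous part $v$ as a general $W^{1,1}$ function, truncate only the jump part to finitely many atoms, and split $E_{\gamma,\lambda}(V)$ by the number of interior jumps: the zero-jump pairs sit inside $E_{\gamma,\lambda}(v)$, whose limiting $F_{\gamma,\lambda}$-mass you read off from~(\ref{lim_W11}), while the one-jump pairs are controlled by localizing $\int|v'|$ near each $x_i$. This route reuses~(\ref{lim_W11}) as a black box instead of re-deriving the absolutely continuous limit, and it avoids the density lemma entirely; the paper's approach is more self-contained and uses the Lipschitz bound to tame the bad set uniformly. Your triangle-inequality split (rescaling $\lambda$) and the paper's (rescaling the function) are equivalent, since $F_{\gamma,\lambda}(u/c)=c^{-1}F_{\gamma,c\lambda}(u)$. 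The observation in your last sentence — that a naive subadditive split of $V$ into $v$ and $w_\varepsilon$ would degrade the jump constant, whence the need to localize — is precisely why both you and the paper must carry the jump part along when computing rather than splitting it off; this is the genuine pitfall and you handled it correctly. The $N$-dimensional reduction is the same as Lemma~\ref{lemma:integral_geometric_repr} and Lemma~\ref{lemma:sectioning}.
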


\begin{rmk}
\begin{em}
For the sake of simplicity, in this paper we consider only the case in which $\Omega=\R^N$. However, it should not be difficult to extend the main results to the case of bounded regular domains. In particular, in the case of convex domains, this should be almost straightforward, since all their one-dimensional sections are just open intervals.
\end{em}
\end{rmk}

\paragraph{\textmd{\textit{Overview of the technique}}}

Let us summarize the main ideas in the proofs of our results, which are different from those used in \cite{BSVY,2022-Poliakovsky-JFA}, since we do not exploit the BBM formula \cite{BBM,Davila-BBM}.

For both theorems, we first establish the result in the one-dimensional case, and then we extend it to the case of higher space dimensions by a sectioning argument. This is a standard but quite effective tool in this kind of problems (see \cite{2020-APDE-AGMP,2022-GP-CCM}).

The proof of the one-dimensional version of Theorem~\ref{thm:liminf} relies on some measure-theoretic arguments that allow us to find sufficiently many disjoint rectangles inside $E_{\gamma,\lambda}$ on which we can control the difference quotient of $u$.

As for Theorem~\ref{thm:lim_SBV}, in view of Theorem~\ref{thm:liminf}, it is enough to prove an estimate from above for the limsup of $F_{\gamma,\lambda}(u)$. To this end we exploit an argument from \cite[Section~3.4]{BSVY} which basically shows that it is enough to prove such an estimate for a class of functions $u$ that is dense in $\dot{SBV}(\R^N)$ with respect to the \emph{strong} BV topology. In dimension one such a class is provided by functions with finitely many jump points that are smooth and Lipschitz continuous in every interval that does not contain such points. For functions in this class, the limit can be easily computed.

\paragraph{\textmd{\textit{The case $p>1$}}}
The functionals considered in the paper \cite{BSVY} actually depend also on a parameter $p\geq 1$, and can be written as
$$F_{p,\gamma,\lambda}(u,\Omega):=\lambda^p \nu_{\gamma}(E_{{\gamma/p},\lambda}(u,\Omega)).$$

The same is true for the various special cases previously considered in
\cite{2006-Nguyen-JFA,2011-Nguyen-Duke,2020-APDE-AGMP,2021-BVY-PNAS,2021-BVY-CalcVar,2022-Poliakovsky-JFA}.

This higher generality allows to obtain characterizations of the Sobolev spaces $W^{1,p}(\R^N)$ or $\dot{W}^{1,p}(\R^N)$, and also for more general types of spaces (see \cite{DLYYZ_JFA,DLYYZ_CalcVar,ZYY_CCM,ZYY_CalcVar}), together with estimates on their semi-norms.

In this paper we only consider the case $p=1$, which is the most challenging, because the gap between $\dot{W}^{1,1}(\R^N)$ and $\dot{BV}(\R^N)$ creates additional difficulties, and we refer to \cite{BSVY} and to the references therein for the numerous interesting results in the case $p>1$. %However, we point out that Question~\ref{question_constant} and Question~\ref{question_TV} (with $\|\nabla u\|_{L^p}$ instead of $\|Du\|_\M$) are still open also in the case $p>1$ if $u\notin \dot{W}^{1,p}(\R^N)$.

%\paragraph{\textmd{\textit{More general domains}}}

\paragraph{\textmd{\textit{Recent developments}}}
After this work was completed, Lahti in \cite{2023-Lahti} proved with different techniques a sharper version of the estimate (\ref{th:liminf}), namely
\begin{equation}\label{est:Lahti}
\liminf_{\lambda\to+\infty} F_{\gamma,\lambda}(u)\geq \frac{C_N}{\gamma} \|D^{a}u\|_\M+\frac{C_N}{1+\gamma}\|D^{j}u\|_\M+ \frac{C_N}{1+\gamma}\|D^{c}u\|_\M,
\end{equation}
for every $u \in \dot{BV}(\R^N)$ and $\gamma>0$.

He also found a Cantor-type function $u\in BV(0,1)$ for which $Du=D^c u$ and
$$\liminf_{\lambda\to+\infty} F_{\gamma,\lambda}(u,(0,1))= \frac{C_1}{1+\gamma}\|D^{c}u\|_\M,$$
thus establishing the optimality of the constant in front of the Cantor part in (\ref{est:Lahti}).

On the other hand, obtaining good estimates from above for functions with non-vanishing Cantor part (in order to extend Theorem~\ref{thm:lim_SBV}) is quite complicated, since there is not a nice class of functions which is strongly dense in $\dot{BV}(\R)$. Indeed, it is not possible to approximate functions with non-vanishing Cantor part in the strong BV topology with functions without a Cantor part. It is also conceivable that for some functions the liminf might differ from the limsup or that different functions might produce different constants in the limit, since these phenomena occur in some similar contexts (see the pathologies in \cite{2018-AnnPDE-BN}).

We also mention another very recent development obtained in \cite{2023-GP-Gamma-liminf}, where it is proved that
$$\Gamma-\liminf_{\lambda\to +\infty} F_{\gamma,\lambda}(u) \geq C_N\cdot \frac{\log 2}{2^{\gamma+1}-1}\cdot \|u\|_{\M},$$
for every $u \in L^1 _{loc}(\R^N)$ and $\gamma>0$, thus providing a positive answer to Question~\ref{question_constant} and Question~\ref{question_TV} also for $u\in L^1_{loc}(\R^N)\setminus \dot{BV}(\R^N)$, in which case the right-hand side is infinite.

\paragraph{\textmd{\textit{Structure of the paper}}}

The paper is organized as follows. In Section~\ref{sec:1d}, after recalling some basic properties of functions of bounded variation in one dimension, we prove the main results in the one-dimensional case. Then, in Section~\ref{sec:Nd} we show how the problem can be reduced to the one-dimensional setting by a sectioning argument, and we complete the proofs in the higher dimensional case.

\section{The one-dimensional case}\label{sec:1d}

In this section we prove our main results in the one-dimensional case $N=1$. We observe that in this case in (\ref{defn:C_N}) we have $C_1=2$ and that we can rewrite the functional in the following more convenient way
\begin{equation}\label{F_E'}
F_{\gamma,\lambda}(u,\Omega)=C_1\lambda \nu_\gamma(E_{\gamma,\lambda}'(u,\Omega)),
\end{equation}
where
$$E_{\gamma,\lambda}'(u,\Omega):=\left\{(x,y)\in \Omega\times\Omega: x<y\mbox{ and } |u(y)-u(x)|>\lambda |y-x|^{1+\gamma}\right\}.$$

We also recall that functions of bounded variation in one dimension have some special properties, that we list in the following lemma.

\begin{lemma}\label{lemma:BV_1d}
Let $u \in \dot{BV}(\R)$. Then we can choose a representative, that we still denote with $u$, satisfying the following properties.
\begin{itemize}
\item $u$ is differentiable in the classical sense at almost every $x\in \R$ and $D^a u=u'\cdot \Leb^1$, where $\Leb^1$ is the one-dimensional Lebesgue measure.
\item $u$ admits a left limit $u(x_-)$ and a right limit $u(x_+)$ at every $x\in \R$, and they coincide for every $x$ outside a set $J_u$ that is at most countable. With these notations, it holds that
$$D^j u=\sum_{x\in J_u} (u(x_+)-u(x_-))\delta_{x}.$$
\item The Cantor part of the derivative is supported on the set $C=C_+\cup C_-$, where
$$C_\pm:=\left\{x\in \R\setminus J_u : \lim_{h\to 0} \frac{u(x+h)-u(x)}{h} =\pm\infty \mbox{\emph{ and }}\lim_{h\to 0^+} \frac{|D^c u ([x,x+h])|}{|D^c u| ([x,x+h])}=1 \right\}.$$
\end{itemize}
\end{lemma}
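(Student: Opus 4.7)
The plan is to fix once and for all a good representative of $u$ (for instance the one with $u(x)=\tfrac{1}{2}(u(x_-)+u(x_+))$ at every point, after verifying that left and right limits exist everywhere), and then to treat the three bullets separately, noting that the first two are essentially standard and the third requires real work.

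For the first two bullets I would just quote the one-dimensional structure theory of $BV$ as in Sections~3.2 and~3.9 of \cite{AFP}. The classical a.e.\ differentiability follows by writing $u=u_1-u_2$ as a difference of monotone functions and applying Lebesgue's theorem on differentiation of monotone functions; once this is known, the identification $D^a u=u'\cdot \Leb^1$ is a matter of checking that $u'$ is the Radon--Nikodym derivative of $D^a u$ with respect to $\Leb^1$, which is immediate because the limits of the difference quotients of $u$ coincide a.e.\ with those of the measure $Du$. Existence of one-sided limits at every point follows from monotonicity, and the set $J_u$ where they disagree is at most countable because $\sum_{x\in J_u}|u(x_+)-u(x_-)|\le \|Du\|_\M<+\infty$; the formula for $D^j u$ is then the definition of the atomic part of the derivative.

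The main work is in bullet three, which I would prove by showing $|D^c u|(\R\setminus(C_+\cup C_-))=0$. Write the polar decomposition $D^c u=\sigma\,|D^c u|$ with $\sigma(x)\in\{-1,+1\}$ holding $|D^c u|$-a.e. By the one-dimensional Besicovitch differentiation theorem applied to the pair $(D^c u,|D^c u|)$, the ratio $D^c u([x,x+h])/|D^c u|([x,x+h])$ converges to $\sigma(x)$ as $h\to 0$ at $|D^c u|$-a.e.\ $x$, which, after taking absolute values, gives the second condition in the definition of $C_\pm$. Because $J_u$ is countable and $|D^c u|$ is non-atomic, $|D^c u|(J_u)=0$, and so the condition $x\in\R\setminus J_u$ is also satisfied $|D^c u|$-a.e.

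It remains to verify the condition on the classical difference quotient. The idea is that $|D^c u|$ is singular with respect to $\Leb^1$, so $|D^c u|([x,x+h])/h\to+\infty$ at $|D^c u|$-a.e.\ $x$ by the standard differentiation argument for singular measures; moreover, since $|D^a u|=|u'|\,\Leb^1$ and $|D^j u|=\sum_{x\in J_u}|u(x_+)-u(x_-)|\delta_x$ are both mutually singular with $|D^c u|$, the Besicovitch differentiation theorem yields $|D^a u|([x,x+h])/|D^c u|([x,x+h])\to 0$ and $|D^j u|([x,x+h])/|D^c u|([x,x+h])\to 0$ at $|D^c u|$-a.e.\ $x$. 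For the chosen good representative one has $u(x+h)-u(x)=Du([x,x+h])$ up to at most half of the jumps at the endpoints, and for $|D^c u|$-a.e.\ $x$ (which is not a jump point) one can choose $h\to 0$ avoiding jumps at $x+h$, or otherwise absorb the endpoint contribution into the $D^j u$ estimate above. Dividing by $h$ and splitting $Du=D^a u+D^j u+D^c u$, the three bounds combine to give $(u(x+h)-u(x))/h = (D^c u([x,x+h])/h)\,(1+o(1))$, whose sign is $\sigma(x)$ and whose modulus diverges; this places $x$ in $C_+$ or $C_-$ according to $\sigma(x)=\pm 1$. The main obstacle is precisely this clean separation between the three parts of the derivative at $|D^c u|$-generic points, which hinges on repeated use of the differentiation theorem for mutually singular Radon measures.
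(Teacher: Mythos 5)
Your proof is correct and follows essentially the same route the paper indicates: the first two bullets are the classical one-dimensional $BV$ structure theory, and the third is derived from the Hahn (polar) decomposition of $D^c u$ combined with repeated applications of the Besicovitch differentiation theorem, applied to one-sided closed intervals on the real line, for the mutually singular pairs $(D^c u, |D^c u|)$, $(|D^c u|, \Leb^1)$, $(|D^a u|, |D^c u|)$ and $(|D^j u|, |D^c u|)$. The paper only sketches this, but the ingredients and structure match yours; the one point you handle that the paper leaves implicit (the endpoint half-jump of the good representative being absorbed by the $|D^j u|/|D^c u|\to 0$ estimate) is treated correctly.
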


\begin{proof}
The first two properties are classical (see, for example, \cite[Theorem 3.28]{AFP}).

As for the third property, we first observe that, on the real line, Besicovitch differentiation theorem \cite[Theorem~2.22]{AFP} holds also with one-sided closed intervals instead of closed balls. Indeed, the proof is based on a covering argument that in the one-dimensional case works also with one-sided intervals. As a consequence, if $\mu \perp \nu$ are two Radon measures supported on disjoint sets, it turns out that
$$\lim_{h\to 0^+}\frac{\nu([x,x+h])}{\mu([x,x+h])} =\lim_{h\to 0^+}\frac{\nu([x,x+h))}{\mu([x,x+h))} = \lim_{h\to 0^-}\frac{\nu([x+h,x])}{\mu([x+h,x])}= \lim_{h\to 0^-}\frac{\nu((x+h,x])}{\mu((x+h,x])}=0$$
for $\mu$-almost every $x\in \R$.

Now let $D^c _+ u$ and $D^c _- u$ denote respectively the positive and negative part in the Hahn decomposition of $D^c u$. If we take the right continuous representative for $u$, then for every $x\in \R\setminus J_u$ and $h>0$ it holds that
$$\frac{u(x+h)-u(x)}{h}= \frac{Du([x,x+h])}{\Leb^1([x,x+h])}=\frac{D^c _+ u([x,x+h])}{\Leb^1([x,x+h])}\left[ 1+\frac{(D^a u+D^j u - D^c _- u)([x,x+h])}{D^c _+ u([x,x+h])}\right],$$
and
$$\frac{|D^c u ([x,x+h])|}{|D^c u| ([x,x+h])}=\frac{|(D^c _+ u - D^c _- u)([x,x+h])|}{(D^c_+ u + D^c _- u)([x,x+h])}=\frac{|1-D^c_- u ([x,x+h])/ D^c _+ u([x,x+h])|}{1+D^c_- u ([x,x+h])/ D^c _+ u([x,x+h])}, $$
while for $h<0$ it holds that
$$\frac{u(x+h)-u(x)}{h}= \frac{Du((x+h,x])}{\Leb^1((x+h,x])}=\frac{D^c _+ u((x+h,x])}{\Leb^1((x+h,x])}\left[ 1+\frac{(D^a u+D^j u - D^c _- u)((x+h,x])}{D^c _+ u((x+h,x])}\right],$$

%We observe that, in order to compute the limit as $h\to 0^-$, in the last line we can replace the interval $(x+h,x]$ with the closed interval $[x+h,x]$, because
%$$(D^a u+D^j u - D^c _- u)((x+h,x])= \lim_{h'\to h^+}(D^a u+D^j u - D^c _- u)([x+h',x]).$$

Therefore, Besicovitch differentiation Theorem implies that $D^c _+ u$ is supported on $C_+$. With a similar argument we obtain also that $D^c _- u$ is supported on $C_-$.
%The third property is a standard consequence of the Hahn decomposition and of Besicovitch differentiation Theorem (see \cite[Theorem~2.22]{AFP}), which on the real line holds also with one-sided closed intervals instead of closed balls (the proof, based on Vitali covering theorem, works without substantial modifications).
\end{proof}

In the proof of Theorem~\ref{thm:lim_SBV} we also need a density result for piecewise smooth functions in $\dot{SBV}(\R)$. The precise class of functions that we consider is the following.

\begin{defn}
We denote with $X(\R)$ the set of functions $u\in \dot{SBV}(\R)$ such that $J_u$ is finite, $Du$ is compactly supported and $u\in C^\infty((a,b))\cap \Lip((a,b))$ for every open interval $(a,b)$ such that $(a,b) \cap J_u=\emptyset$.
\end{defn}

The strong density of $X(\R)$ into $\dot{SBV}(\R)$ is provided by the following lemma, which is elementary in the one-dimensional case (a similar statement in the higher dimensional case is proved in \cite{2017-DFP-Lincei}).

\begin{lemma}\label{lemma:density}
Let $u\in \dot{SBV}(\R)$. Then there exists a sequence of functions $\{u_n\}\subseteq X(\R)$ such that $\|Du_n-Du\|_\M\to 0$ as $n\to +\infty$.
\end{lemma}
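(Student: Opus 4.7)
The plan is to pass from $u\in \dot{SBV}(\R)$ to the target class $X(\R)$ through three successive reductions, each with error below $\ep/3$ in the norm $\|\cdot\|_\M$: first truncate so that $Du$ becomes compactly supported, then remove all but finitely many jumps, and finally mollify the absolutely continuous part while preserving the remaining jumps.

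For the first step, since $|Du|$ is a finite measure on $\R$ and $J_u$ is at most countable, I pick $R>0$ with $\pm R\notin J_u$ and $|Du|(\R\setminus[-R,R])<\ep/3$. Then I set $u^R$ equal to $u$ on $[-R,R]$ and equal to $u(\pm R)$ outside this interval. Since $u$ is continuous at $\pm R$, no new jump is created there, so $u^R\in \dot{SBV}(\R)$, its distributional derivative is the restriction of $Du$ to $(-R,R)$, and $\|Du-Du^R\|_\M<\ep/3$.

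For the second step, enumerate $J_{u^R}=\{y_j\}_{j\geq 1}$, set $b_j:=u^R(y_j+)-u^R(y_j-)$, and choose $k$ so that $\sum_{j>k}|b_j|<\ep/3$, which is possible since $\sum_j|b_j|=\|D^j u^R\|_\M<\infty$. The pure-jump function $g(x):=\sum_{j>k,\,y_j\leq x} b_j$ is well defined and bounded, and $\tilde u:=u^R-g$ lies in $\dot{SBV}(\R)$, has jump set contained in $\{y_1,\ldots,y_k\}$, has compactly supported derivative, and satisfies $\|D\tilde u-Du^R\|_\M=\sum_{j>k}|b_j|<\ep/3$.

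For the third step, let $f:=D^a\tilde u/\Leb^1\in L^1(\R)$ (compactly supported), let $\phi_{1/n}$ be a standard mollifier, and set $f_n:=f*\phi_{1/n}\in C^\infty_c(\R)\cap L^\infty(\R)$, so that $f_n\to f$ in $L^1(\R)$. Writing $a_j:=\tilde u(y_j+)-\tilde u(y_j-)$, I define
\begin{equation*}
u_n(x):=\int_{-\infty}^x f_n(t)\,dt+\sum_{j=1}^{k} a_j\, \mathbf{1}_{[y_j,+\infty)}(x).
\end{equation*}
Since $f_n$ is smooth, bounded and compactly supported, $u_n$ is $C^\infty$ on every interval disjoint from $\{y_1,\ldots,y_k\}$ and is constant outside a compact set; in particular $u_n\in W^{1,\infty}$ on every such interval (including the unbounded ones $(-\infty,y_1)$ and $(y_k,+\infty)$), so $u_n\in X(\R)$. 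A direct computation gives $\|Du_n-D\tilde u\|_\M=\|f_n-f\|_{L^1}\to 0$, and a diagonal selection in $\ep=1/m$ yields the desired sequence. No substantial obstacle is expected; the main point worth noting is that none of the three operations introduces a Cantor part, which is immediate since all of them preserve the SBV structure.
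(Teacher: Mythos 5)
Your proof is correct and follows essentially the same strategy as the paper's: split $u$ into an absolutely continuous part and a pure-jump part, approximate the absolutely continuous density by functions in $C^\infty_c(\R)$ via mollification, keep only finitely many of the jump masses, and recombine. The paper phrases this more compactly by invoking the decomposition $u=u^a+u^j$ from \cite[Corollary~3.33]{AFP} and absorbing the compact-support issue into the choice of approximants $v_n\in C^\infty_c(\R)$, whereas you build the decomposition by hand and add an explicit truncation step, but the underlying construction is the same.
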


\begin{proof}
By \cite[Corollary~3.33]{AFP} we can write $u=u^a+u^j$, where $u^a\in \dot{W}^{1,1}(\R)$ and $u^j$ is a pure jump function, namely $Du^j=\sum_{i\in I} \alpha_i \delta_{x_i}$, where $I$ is at most countable, $\delta_{x_i}$ denotes a Dirac delta in the point $x_i$, and $\{\alpha_i\}$ is a summable sequence (or a finite set) of real numbers. Therefore, we have that $Du=(u^a)'\Leb^1+Du^j$.

We can approximate separately the function $(u^a )' \in L^1(\R)$ with a sequence $\{v_n\} \subseteq C^\infty _c (\R)$ and the measure $Du^j$ with measures $\mu_n=\sum_{i\in I_n} \alpha_i \delta_{x_i}$ that are finite sums of Dirac masses, so that $\|v_n\Leb^1 + \mu_n - (u^a)'\Leb^1 - Du^j \|_\M\to 0$.

Now it is enough to choose a sequence of functions $\{u_n\}$ such that $Du_n=v_n\Leb^1 + \mu_n$.
\end{proof}

\subsection{Proof of Theorem~\ref{thm:liminf} when $N=1$}

Let $J=J_u$ and $C=C_+\cup C_-$ be as in Lemma~\ref{lemma:BV_1d} and let us set
$$A:=\{x\in\R:u \mbox{ is differentiable at }x\mbox{ and } u'(x)\neq 0\}.$$

\paragraph{\textmd{\textit{Approximation of the jump part}}}

Let us fix a small positive real number $\ep>0$, and let $J_\ep=\{s_1,\dots,s_{k_\ep}\}$ be a finite subset of $J$ such that
\begin{equation}\label{J-J_ep}
|Du|(J\setminus J_\ep)\leq\ep.
\end{equation}

For every $i\in \{1,\dots,k_\ep\}$ let us fix a positive radius $r_i>0$ in such a way that the following properties hold.
\begin{itemize}
\item $[s_{i_1}-r_{i_1},s_{i_1}+r_{i_1}]\cap [s_{i_2}-r_{i_2},s_{i_2}+r_{i_2}]=\emptyset$ for every $1\leq i_1< i_2\leq k_\ep$.

\item For every $i\in \{1,\dots,k_\ep\}$ it holds that
$$|u(y)-u(x)|>(1-\ep)|Du|(\{s_i\}) \qquad \forall (x,y)\in (s_i-r_i,s_i)\times(s_i,s_i+r_i).$$

\item If we set
$$U_\ep ^j:=\bigcup_{i=1} ^{k_\ep} [s_i-r_i,s_i+r_i],$$
then it holds that
$$|D^a u|(U_\ep ^j)+|D^c u|(U_\ep ^j)\leq \ep.$$
\end{itemize}

Now we observe that for every $i\in \{1,\dots, k_\ep\}$ we have that
$$|u(y)-u(x)|> (1-\ep)|Du|(\{s_i\})>\lambda |y-x|^{1+\gamma}$$
for every $(x,y)\in (s_i-r_i,s_i)\times(s_i,s_i+r_i)$ such that
$$|y-x|<\left(\frac{(1-\ep)|Du|(\{s_i\})}{\lambda}\right)^{\frac{1}{1+\gamma}}=:\delta_{\lambda,\ep}(s_i).$$

As a consequence, as soon as $\lambda>\lambda_\ep ^j:=\max\{(1-\ep)|Du|(\{s_i\})/r_i^{1+\gamma}:i\in \{1,\dots,k_\ep\}\}$, so that $\delta_{\lambda,\ep}(s_i)<r_i$ for every $i\in \{1,\dots,k_\ep\}$, we have that
\begin{equation}\label{defn:J_lambda,ep}
\mathcal{J}_{\lambda,\ep}:=\bigcup_{i=1} ^{k_\ep} \left\{(x,y)\in (s_i-\delta_{\lambda,\ep}(s_i),s_i)\times (s_i,s_i+r_i): y<x+\delta_{\lambda,\ep}(s_i)\right\} \subseteq E_{\gamma,\lambda}'(u) \cap (U_\ep ^j \times U_\ep ^j).
\end{equation}

\paragraph{\textmd{\textit{Approximation of the Cantor part}}}

Let us set $C_\ep:=C\setminus U_{\ep} ^j$. We observe that
\begin{equation}\label{est:C_ep}
|D^c u|(C_\ep)\geq |D^c u|(C)-|D^c u|(U_\ep ^j)\geq \|D^c u\|_\M-\ep.
\end{equation}

Since $U_\ep ^j$ is a closed set, we can find a neighborhood $U_\ep ^c$ of $C_\ep$ such that $U_\ep ^j\cap U_\ep ^c =\emptyset$ and
\begin{equation}\label{defn:U_ep^c}
\Leb^1(U_\ep ^c)\leq \ep,\qquad\mbox{and}\qquad |D^a u|(U_\ep ^c)\leq \ep.
\end{equation}

For every $\lambda\geq 0$ and every $z\in C_\ep\cap C_+$ let us set
\begin{align*}
r_{\lambda,\ep}(z):=\sup\biggl\{r>0:\ &\frac{u(z+h)-u(z)}{h^{1+\gamma}}\geq 2^{1+\gamma}\lambda \quad\mbox{and} \quad  u(z-h)\leq u(z) \quad \forall h\in(0,r),\\
&|D^c u([z,z+r])|\geq (1-\ep)|D^c u|([z,z+r]),\quad [z-r,z+r]\subseteq U_\ep ^c \biggr\},
\end{align*}
and similarly for $z\in C_\ep\cap C_-$ let us set
\begin{align*}
r_{\lambda,\ep}(z):=\sup\biggl\{r>0:\ &\frac{u(z+h)-u(z)}{h^{1+\gamma}}\leq -2^{1+\gamma}\lambda \quad\mbox{and} \quad  u(z-h)\geq u(z) \quad \forall h\in(0,r),\\
&|D^c u([z,z+r])|\geq (1-\ep)|D^c u|([z,z+r]),\quad [z-r,z+r]\subseteq U_\ep ^c \biggr\}.
\end{align*}

We claim that the number $r_{\lambda,\ep}(z)$ that we have defined satisfies the following properties.
\begin{itemize}
\item $r_{\lambda,\ep}(z)\in (0,\ep/2]$ for every $x\in C_\ep$ and every $\lambda\geq 0$.
\item The function $\lambda\mapsto r_{\lambda,\ep}(z)$ is nonincreasing on $[0,+\infty)$ for every $z\in C_\ep$ and
$$\lim_{\lambda\to +\infty}r_{\lambda,\ep}(z)=0 \qquad \forall z\in C_\ep.$$
\item If $r_{\lambda,\ep}(z)<r_{0,\ep}(z)$ for some $z\in C_\ep$ and some $\lambda>0$, then it holds that
\begin{equation}\label{eq:r_lambda>Du}
2^{1+\gamma}\lambda r_{\lambda,\ep}(z)^{1+\gamma} \geq |Du([z,z+r_{\lambda,\ep}(z)])|.
\end{equation}
\end{itemize}

Indeed, we have that $r_{\lambda,\ep}(z)>0$ because all the properties in the definition are verified when $r>0$ is sufficiently small, thanks to the definition of $C_\pm$ and the fact that $U_\ep ^c$ is a neighborhood of $C_\ep$, while $r_{\lambda,\ep}(z)\leq\ep/2$ thanks to (\ref{defn:U_ep^c}).

The monotonicity of $r_{\lambda,\ep}(z)$ with respect to $\lambda$ and the limit as $\lambda\to +\infty$ are immediate consequences of the first condition in the definition of $r_{\lambda,\ep}(z)$, which is also the only one involving $\lambda$.

The third property follows from the supremum property of $r_{\lambda,\ep}(z)$, and the fact that $|Du([z,z+r_{\lambda,\ep}(z)])|=|u((z+r_{\lambda,\ep}(z))_+)-u(z)|$. In fact all the properties in the definition of $r_{\lambda,\ep}(z)$ but the first one hold up to $r_{0,\ep}(z)$, because they are independent of $\lambda$, so if $r_{\lambda,\ep}(z)<r_{0,\ep}(z)$ then it means that the first property fails for a sequence of radii $r_n\searrow r_{\lambda,\ep}(z)$, and this implies (\ref{eq:r_lambda>Du}).

Now let us set $C_{\lambda,\ep}:=\{z\in C_\ep:r_{\lambda,\ep}(z)<r_{0,\ep}(z)\}$, so that (\ref{eq:r_lambda>Du}) holds for every $z\in C_{\lambda,\ep}$.

From the second property of $r_{\lambda,\ep}(z)$ we deduce that $C_{\lambda_1,\ep}\subseteq C_{\lambda_2,\ep}$ for every $0< \lambda_1<\lambda_2$ and that
$$\bigcup_{\lambda> 0} C_{\lambda,\ep} = C_\ep.$$

By \cite[Lemma~2.1]{1991-Aldaz}, for every $\lambda>0$  we can find a finite set $\{z_1,\dots,z_{m_{\lambda,\ep}}\}\subseteq C_{\lambda,\ep}$ such that
$$[z_{i_1},z_{i_1}+r_{\lambda,\ep}(z_{i_1})]\cap [z_{i_2},z_{i_2}+r_{\lambda,\ep}(z_{i_2})]=\emptyset \qquad \forall 1\leq i_1< i_2\leq m_{\lambda,\ep},$$
and
\begin{equation}\label{est:covering}
|D^c u|\Biggl(\bigcup_{i=1}^{m_{\lambda,\ep}} [z_i,z_i+r_{\lambda,\ep}(z_i)]\Biggr)\geq \frac{1}{2+\ep}|D^c u|\Biggl(\bigcup_{z\in C_{\lambda,\ep}} [z,z+r_{\lambda,\ep}(z)]\Biggr)\geq \frac{1}{2+\ep}|D^c u|(C_{\lambda,\ep}).
\end{equation}

Now we observe that if $z\in C_{\lambda,\ep}$, then for every $(x,y)\in (z-r_{\lambda,\ep}(z),z)\times (z,z+r_{\lambda,\ep}(z))$ we have that
$$|u(y)-u(x)|\geq|u(y)-u(z)|\geq 2^{1+\gamma}\lambda (y-z)^{1+\gamma} = \lambda (y-x)^{1+\gamma} \left(\frac{2(y-z)}{y-x}\right)^{1+\gamma}.$$

Therefore, for $(x,y)\in (z-r_{\lambda,\ep}(z),z)\times (z,z+r_{\lambda,\ep}(z))$, it holds that
$$y>2z-x\Longrightarrow \frac{2(y-z)}{y-x}>1 \Longrightarrow (x,y)\in E_{\gamma,\lambda}'(u).$$

This means that
\begin{align}
\mathcal{C}_{\lambda,\ep}&:=\bigcup_{i=1} ^{m_{\lambda,\ep}} \left\{(x,y)\in (z_i-r_{\lambda,\ep}(z_i),z_i)\times (z_i,z_i+r_{\lambda,\ep}(z_i)):y>2z_i -x \right\}\nonumber\\
&\subseteq E_{\gamma,\lambda}'(u) \cap (U_\ep ^c \times U_\ep ^c),\label{defn:C_lambda,ep}
\end{align}
for every $\lambda>0$.

\paragraph{\textmd{\textit{Approximation of the absolutely continuous part}}}

Let us set $A_\ep:=A\setminus (U_\ep ^j \cup U_\ep ^c)$. We observe that
\begin{equation}\label{est:A_ep}
|D^a u|(A_\ep)\geq \|D^a u\|_\M- |D^a u|(U_\ep ^j)-|D^a u|(U_\ep ^c)\geq \|D^a u\|_\M-2\ep.
\end{equation}

For every $\lambda>0$ let us set
$$A_{\lambda,\ep}:=\left\{x\in A_\ep : \frac{|u(y)-u(x)|}{y-x}>(1-\ep)|u'(x)| \ \ \forall y \in \left(x,x+\left(\frac{(1-\ep)|u'(x)|}{\lambda}\right)^{\frac{1}{\gamma}}\right)\right\}.$$

We observe that $A_{\lambda_1,\ep}\subseteq A_{\lambda_2,\ep}$ for every $0<\lambda_1<\lambda_2$ and that
$$\bigcup_{\lambda>0}A_{\lambda,\ep}=A_\ep.$$

Moreover, if $x\in A_{\lambda,\ep}$, then we have that
$$|u(y)-u(x)|>(1-\ep)(y-x)|u'(x)|>\lambda (y-x)^{1+\gamma},$$
for every
$$y \in \left(x,x+\left(\frac{(1-\ep)|u'(x)|}{\lambda}\right)^{\frac{1}{\gamma}}\right).$$

As a consequence, it turns out that
\begin{equation}\label{defn:A_lambda,ep}
\mathcal{A}_{\lambda,\ep}:=\left\{(x,y)\in A_{\lambda,\ep}\times \R : y \in \left(x,x+\left(\frac{(1-\ep)|u'(x)|}{\lambda}\right)^{\frac{1}{\gamma}}\right) \right\}
\subseteq E_{\gamma,\lambda}'(u) \setminus (U_\ep ^j \cup U_\ep ^c)\times \R,
\end{equation}
for every $\lambda>0$.

\paragraph{\textmd{\textit{Computation of the functional in the three parts}}}

From (\ref{F_E'}), (\ref{defn:J_lambda,ep}), (\ref{defn:C_lambda,ep}) and (\ref{defn:A_lambda,ep}) we deduce that
\begin{equation}\label{F=J+C+A}
F_{\gamma,\lambda}(u)\geq C_1\lambda \left(\nu_\gamma(\mathcal{J}_{\lambda,\ep})+ \nu_\gamma(\mathcal{C}_{\lambda,\ep})+ \nu_\gamma(\mathcal{A}_{\lambda,\ep}) \strut\right),
\end{equation}
for every $\lambda>\lambda_\ep ^j$.

The three addenda in the right-hand side can be computed as follows.
\begin{eqnarray*}
\nu_\gamma(\mathcal{J}_{\lambda,\ep})&=&
\sum_{i=1} ^{k_\ep} \int_{s_i - \delta_{\lambda,\ep}(s_i)} ^{s_i} dx \int_{s_i} ^{x+\delta_{\lambda,\ep}(s_i)} (y-x)^{\gamma-1}\,dy\\
&=& \sum_{i=1} ^{k_\ep} \frac{\delta_{\lambda,\ep}(s_i)^{1+\gamma}}{1+\gamma}\\
&=& \sum_{i=1} ^{k_\ep} \frac{(1-\ep)|Du|(\{s_i\})}{\lambda(1+\gamma)}\\
&=& \frac{1-\ep}{(1+\gamma)\lambda} |Du|(J_\ep).
\end{eqnarray*}

Recalling (\ref{J-J_ep}), we obtain that
\begin{equation}\label{est:J}
\nu_\gamma(\mathcal{J}_{\lambda,\ep})\geq \frac{1-\ep}{(1+\gamma)\lambda} (\|D^j u\|_\M-\ep).
\end{equation}

Now we compute the contribution of the Cantor part.
\begin{eqnarray*}
\nu_\gamma(\mathcal{C}_{\lambda,\ep})&=&
\sum_{i=1} ^{m_{\lambda,\ep}} \int_{z_i-r_{\lambda,\ep}(z_i)} ^{z_i} dx \int_{2z_i -x} ^{z_i+r_{\lambda,\ep}(z_i)} (y-x)^{\gamma-1}\,dy\\
&=& \sum_{i=1} ^{m_{\lambda,\ep}} \int_{z_i-r_{\lambda,\ep}(z_i)} ^{z_i} \frac{1}{\gamma}\left[(z_i+r_{\lambda,\ep}(z_i)-x)^\gamma -2^\gamma (z_i-x)^\gamma\right]dx\\
&=& \sum_{i=1} ^{m_{\lambda,\ep}} \frac{2^\gamma-1}{\gamma(1+\gamma)}r_{\lambda,\ep}(z_i)^{1+\gamma}.
\end{eqnarray*}

Recalling (\ref{eq:r_lambda>Du}), (\ref{J-J_ep}) and (\ref{defn:U_ep^c}) we obtain that
\begin{align*}
\nu_\gamma(\mathcal{C}_{\lambda,\ep})&\geq
\sum_{i=1} ^{m_{\lambda,\ep}}  \frac{2^\gamma-1}{\gamma(1+\gamma)2^{1+\gamma}} \frac{|Du([z_i,z_i+r_{\lambda,\ep}(z_i)])|}{\lambda}\\
&\geq \sum_{i=1} ^{m_{\lambda,\ep}}  \frac{2^\gamma-1}{\gamma(1+\gamma)2^{1+\gamma}} \frac{|D^c u([z_i,z_i+r_{\lambda,\ep}(z_i)])| -|(D^j u +D^a u) ([z_i,z_i+r_{\lambda,\ep}(z_i)])|}{\lambda}\\
&\geq \left[\sum_{i=1} ^{m_{\lambda,\ep}}  \frac{2^\gamma-1}{\gamma(1+\gamma)2^{1+\gamma}\lambda} |D^c u([z_i,z_i+r_{\lambda,\ep}(z_i)])| \right]
-\frac{|D^j u|(\R\setminus J_\ep)-|D^a u|(U_\ep ^c)}{\lambda} \\
&\geq \left[\sum_{i=1} ^{m_{\lambda,\ep}}  \frac{2^\gamma-1}{\gamma(1+\gamma)2^{1+\gamma}\lambda} |D^c u([z_i,z_i+r_{\lambda,\ep}(z_i)])| \right] - \frac{2\ep}{\lambda}.
\end{align*}

From the definition of $r_{\lambda,\ep}(z)$ we deduce that 
$$|D^c u([z_i,z_i+r_{\lambda,\ep}(z_i)])| \geq (1-\ep)|D^c u|([z_i,z_i+r_{\lambda,\ep}(z_i)]),$$
for every $i\in\{1,\dots,m_{\lambda,\ep}\}$.

As a consequence, from (\ref{est:covering}) we deduce that
\begin{eqnarray}
\nu_\gamma(\mathcal{C}_{\lambda,\ep})&\geq& \frac{(2^\gamma-1)(1-\ep)}{\gamma(1+\gamma)2^{1+\gamma}\lambda} |D^c u|\!\left(\bigcup_{i=1} ^{m_{\lambda,\ep}}[z_i,z_i+r_{\lambda,\ep}(z_i)]\right) -\frac{2\ep}{\lambda}\nonumber\\
&\geq& \frac{(2^\gamma-1)(1-\ep)}{\gamma(1+\gamma)2^{1+\gamma} (2+\ep)\lambda}|D^c u|(C_{\lambda,\ep})-\frac{2\ep}{\lambda}. \label{est:C}
\end{eqnarray}

Finally, we compute the contribution of the absolutely continuous part
\begin{eqnarray}
\nu_\gamma(\mathcal{A}_{\lambda,\ep}) &=&
\int_{A_{\lambda,\ep}} dx \int_{x} ^{x+\left(\frac{(1-\ep)|u'(x)|}{\lambda}\right)^{\frac{1}{\gamma}}} (y-x)^{\gamma-1} \,dy\nonumber\\
&=& \frac{1-\ep}{\gamma \lambda} \int_{A_{\lambda,\ep}} |u'(x)|\, dx \nonumber\\
&=& \frac{1-\ep}{\gamma \lambda} |D^a u|(A_{\lambda,\ep}). \label{est:A}
\end{eqnarray}

Plugging (\ref{est:J}), (\ref{est:C}) and (\ref{est:A}) into (\ref{F=J+C+A}) we obtain that
\begin{multline*}
F_{\gamma,\lambda}(u)\geq \\
C_1 \left(\frac{1-\ep}{1+\gamma}(\|D^j u\|_\M-\ep) + \frac{(2^\gamma-1)(1-\ep)}{\gamma(1+\gamma)2^{1+\gamma}(2+\ep)} |D^c u|(C_{\lambda,\ep}) -2\ep + \frac{1-\ep}{\gamma} |D^a u|(A_{\lambda,\ep}) \right),
\end{multline*}
and hence
\begin{multline*}
\liminf_{\lambda\to +\infty} F_{\gamma,\lambda}(u)\geq\\
 C_1 \left(\frac{1-\ep}{1+\gamma}(\|D^j u\|_\M-\ep) + \frac{(2^\gamma-1)(1-\ep)}{\gamma(1+\gamma)2^{1+\gamma}(2+\ep)} |D^c u|(C_{\ep}) -2\ep + \frac{1-\ep}{\gamma} |D^a u|(A_{\ep}) \right).
\end{multline*}

Finally, recalling (\ref{est:C_ep}) and (\ref{est:A_ep}) we deduce that
\begin{multline*}
\liminf_{\lambda\to +\infty} F_{\gamma,\lambda}(u)\geq\\
 C_1 \left(\frac{1-\ep}{1+\gamma}(\|D^j u\|_\M-\ep) + \frac{(2^\gamma-1)(1-\ep)}{\gamma(1+\gamma)2^{1+\gamma}(2+\ep)} \|D^c u\|_\M -3\ep+ \frac{1-\ep}{\gamma} (\|D^a u\|_\M-2\ep) \right).
\end{multline*}

Letting $\ep\to 0^+$ we obtain (\ref{th:liminf}) when $N=1$.
\qed

\subsection{Proof of Theorem~\ref{thm:lim_SBV} when $N=1$}

First of all, we observe that it is enough to show that
\begin{equation}\label{est:limsup}
\limsup_{\lambda\to +\infty} F_{\gamma,\lambda}(u) \leq \frac{C_1}{\gamma} \|D^{a}u\|_\M +\frac{C_1}{\gamma+1}\|D^{j}u\|_\M,
\end{equation}
because the opposite inequality is provided by Theorem~\ref{thm:liminf}, that we have already proved in the case $N=1$.

We divide the proof of (\ref{est:limsup}) into two steps. First we prove the result for functions $u\in X(\R)$, then we exploit Lemma~\ref{lemma:density} to extend the result to all $u\in \dot{SBV}(\R)$.

\paragraph{\textmd{\textit{Step 1: $u\in X(\R)$}}}

Let $s_1<s_2<\dots<s_n$ be the jump points of $u$, namely the finitely many elements of $J_u$, and let $(a,b)$ be an interval such that $Du$ is supported on $[a+1,b-1]$. Let us set $s_0:=a$ and $s_{n+1}:=b$.

With these definitions, we also set $\Omega_i:=(s_i,s_{i+1})$ for every $i\in \{0,\dots,n\}$,
$$d_0:=\min\left\{|s_{i+1}-s_i|:i\in\{0,\dots,n\}\strut\right\},\quad\mbox{and}\quad L:=\max\left\{\|u'\|_{L^\infty(\Omega_i)}:i\in \{0,\dots,n\}\right\}.$$

Now we observe that $|u(y)-u(x)|\leq \|Du\|_\M$ for every $(x,y)\in \R^2$, and therefore
\begin{equation}\label{est:y-x}
E_{\gamma,\lambda}'(u)\subseteq \left\{(x,y)\in \R^2:0<y-x<\left(\frac{\|Du\|_\M}{\lambda}\right)^{\frac{1}{1+\gamma}} \right\}.
\end{equation}

Let us set
$$\lambda_0:=\frac{\|Du\|_\M}{\min\{d_0^{1+\gamma},1\}},$$
so that for every $\lambda>\lambda_0$ we have that
\begin{equation}\label{est:lambda>lambda_0}
\left(\frac{\|Du\|_\M}{\lambda}\right)^{\frac{1}{1+\gamma}} <\min\{d_0,1\}.
\end{equation}

As a consequence, setting
$$R_\lambda ^i:=\left(s_i-\left(\frac{\|Du\|_\M}{\lambda}\right)^{\frac{1}{1+\gamma}},s_i \right)\times \left(s_i,s_i+\left(\frac{\|Du\|_\M}{\lambda}\right)^{\frac{1}{1+\gamma}} \right),$$
for every $\lambda>\lambda_0$ we have that
\begin{equation}\label{eq:E<Omega+R}
E_{\gamma,\lambda}'(u)\subseteq \bigcup_{i=0} ^{n} \Omega_i ^2 \cup \bigcup_{i=1} ^{n} R_{\lambda} ^i.
\end{equation}

We point out that (\ref{est:y-x}) and (\ref{est:lambda>lambda_0}) imply that points outside $[a,b]^2$ cannot contribute to $E_{\gamma,\lambda}(u)$ when $\lambda>\lambda_0$, because $u$ is constant in $(-\infty,a+1)$ and in $(b-1,+\infty)$.

Now let us fix $\ep>0$ and let us set
$$\Omega_{\lambda,\ep} ^i:=\left\{x\in\Omega_i: \frac{|u(y)-u(x)|}{y-x}\leq |u'(x)|+\ep \quad \forall y \in \left(x,x+\left(\frac{\|Du\|_\M}{\lambda}\right)^{\frac{1}{1+\gamma}}\right)\cap\Omega_i \right\}.$$

We observe that $\Omega_{\lambda_1,\ep} ^i \subseteq \Omega_{\lambda_2,\ep} ^i$ for every $0<\lambda_1<\lambda_2$, and that
$$\bigcup_{\lambda>0} \Omega_{\lambda,\ep} ^i \supseteq \Omega_i.$$

Moreover, if $(x,y)\in E_{\gamma,\lambda}'(u)\cap \Omega_i ^2$ for some $x\in\Omega_{\lambda,\ep}^i$, then we claim that
$$y<x+\left(\frac{|u'(x)|+\ep}{\lambda}\right)^{\frac{1}{\gamma}}.$$

Indeed, if this is not the case, then from (\ref{est:y-x}) and the definition of $\Omega_{\lambda,\ep}^i$ we deduce that
$$|u(y)-u(x)|\leq (y-x)(|u'(x)|+\ep)\leq \lambda (y-x)^{1+\gamma},$$
which contradicts the fact that $(x,y)\in E_{\gamma,\lambda}'(u)$.

On the other hand, since $u$ is Lipschitz continuous with Lipschitz constant bounded by $L$ in each of the intervals $\Omega_i$, for every $(x,y)\in E_{\gamma,\lambda}'(u)\cap \Omega_i ^2$ it holds that
$$y<x+\left(\frac{L}{\lambda}\right)^{\frac{1}{\gamma}},$$
otherwise we would have that $|u(y)-u(x)|\leq (y-x)L\leq \lambda (y-x)^{1+\gamma}$.

Hence we obtain the following estimate
\begin{eqnarray}
\nu_\gamma(E_{\gamma,\lambda}'(u)\cap \Omega_i ^2)&\leq&
\int_{\Omega_{\lambda,\ep}^i}dx \int_{x} ^{x+\left(\frac{|u'(x)|+\ep}{\lambda}\right)^{\frac{1}{\gamma}}} (y-x)^{\gamma-1}\,dy\nonumber\\
&&\mbox{}+\int_{\Omega_i\setminus\Omega_{\lambda,\ep}^i}dx \int_{x} ^{x+\left(\frac{L}{\lambda}\right)^{\frac{1}{\gamma}}} (y-x)^{\gamma-1}\,dy\nonumber\\
&=& \frac{1}{\gamma\lambda}\left[\int_{\Omega_{\lambda,\ep}^i}|u'(x)|\,dx +\ep\Leb^1(\Omega_{\lambda,\ep}^i)+ L\Leb^1(\Omega_i\setminus\Omega_{\lambda,\ep}^i)\right].\label{est:Omega_i}
\end{eqnarray}

Now we need to estimate the contribution of the rectangles $R_\lambda ^i$. To this end, let $\lambda^j_\ep>0$ be a positive number large enough so that for every $i\in \{1,\dots,n\}$ and every $\lambda>\lambda_\ep ^j$ it holds that
$$|u(y)-u(x)|\leq (1+\ep)|Du|(\{s_i\}) \qquad \forall (x,y)\in R_{\lambda} ^i.$$

Then we have that
$$|u(y)-u(x)|\leq (1+\ep)|Du|(\{s_i\}) \leq \lambda (y-x)^{1+\gamma},$$
for every $(x,y)\in R_\lambda ^i$ such that
$$y\geq x+\left(\frac{(1+\ep)|Du|(\{s_i\})}{\lambda}\right)^{\frac{1}{1+\gamma}}.$$

Therefore we deduce that
\begin{eqnarray*}
E_{\gamma,\lambda}'(u)\cap R_\lambda ^i \subseteq \left\{(x,y)\in \R^2 :
x\in \left(s_i-\left(\frac{(1+\ep)|Du|(\{s_i\})}{\lambda}\right)^{\frac{1}{1+\gamma}},s_i\right),\right.\\
\left. y\in \left(s_i,x+\left(\frac{(1+\ep)|Du|(\{s_i\})}{\lambda}\right)^{\frac{1}{1+\gamma}}\right)\right\}.
\end{eqnarray*}

Hence we obtain the following estimate
\begin{eqnarray}
\nu_\gamma(E_{\gamma,\lambda}'(u)\cap R_\lambda ^i) &\leq& \int_{s_i-\left(\frac{(1+\ep)|Du|(\{s_i\})}{\lambda}\right)^{\frac{1}{1+\gamma}}} ^{s_i} dx \int_{s_i} ^{x+\left(\frac{(1+\ep)|Du|(\{s_i\})}{\lambda}\right)^{\frac{1}{1+\gamma}}} (y-x)^{\gamma-1}\,dy\nonumber\\
&=&\frac{(1+\ep)|Du|(\{s_i\})}{(1+\gamma)\lambda}.\label{est:R_i}
\end{eqnarray}

Finally, combining (\ref{F_E'}), (\ref{eq:E<Omega+R}), (\ref{est:Omega_i}) and (\ref{est:R_i}), for every $\lambda>\max\{\lambda_0,\lambda_\ep ^j\}$ we obtain that
$$F_{\gamma,\lambda}(u)\leq \sum_{i=0} ^n \frac{C_1}{\gamma}\left[\int_{\Omega_{\lambda,\ep}^i}|u'(x)|\,dx +\ep\Leb^1(\Omega_{\lambda,\ep}^i)+ L\Leb^1(\Omega_i\setminus\Omega_{\lambda,\ep}^i)\right] + \sum_{i=1} ^n \frac{C_1 (1+\ep)|Du|(\{s_i\})}{(1+\gamma)},$$
and hence
$$\limsup_{\lambda\to +\infty} F_{\gamma,\lambda}(u)\leq \frac{C_1}{\gamma}\left[\|D^a u\|_\M +\ep(b-a)\right] + \frac{C_1 (1+\ep)\|D^j u\|_\M}{(1+\gamma)}.$$

Letting $\ep\to 0^+$, we obtain (\ref{est:limsup}).

\paragraph{\textmd{\textit{Step 2: $u\in \dot{SBV}(\R)$}}}

We exploit the very same argument used in \cite[Section~3.4]{BSVY} to reduce the case $u\in \dot{W}^{1,1}(\R^N)$ to the case in which $u$ is smooth and has compactly supported gradient.

So, given $u\in \dot{SBV}(\R)$, let $\{u_n\}$ be a sequence provided by Lemma~\ref{lemma:density}. By the triangle inequality for every $n\in\N$ and every $\ep\in (0,1)$ it holds that
$$E_{\gamma,\lambda}'(u)\subseteq E_{\gamma,\lambda}'(u_n/(1-\ep)) \cup E_{\gamma,\lambda}'((u-u_n)/\ep).$$

Recalling (\ref{F_E'}), it follows that
$$F_{\gamma,\lambda}(u)\leq F_{\gamma,\lambda}(u_n/(1-\ep)) + F_{\gamma,\lambda}((u-u_n)/\ep),$$
and hence
$$\limsup_{\lambda\to +\infty} F_{\gamma,\lambda}(u)\leq \limsup_{\lambda\to +\infty} F_{\gamma,\lambda}(u_n/(1-\ep)) + \sup_{\lambda>0} F_{\gamma,\lambda}((u-u_n)/\ep).$$

Since $u_n\in X(\R)$, by Step~1 and the second inequality in (\ref{est:sup_F}) we conclude that
$$\limsup_{\lambda\to +\infty} F_{\gamma,\lambda}(u)\leq \left[\frac{C_1}{\gamma} \frac{\|D^{a}u_n\|_\M}{1-\ep}+\frac{C_1}{\gamma+1}\frac{\|D^{j}u_n\|_\M}{1-\ep}\right] + c_2(N,\gamma)\frac{\|D(u-u_n)\|_\M}{\ep}.$$

Letting first $n\to +\infty$, and then $\ep \to 0^+$ we obtain (\ref{est:limsup}).
\qed

\section{The higher dimensional case}\label{sec:Nd}

In this section we extend the results of Section~\ref{sec:1d} to the case $N>1$, thus establishing Theorem~\ref{thm:liminf} and Theorem~\ref{thm:lim_SBV} in full generality.

The main tool that we exploit is a representation formula for $F_{\gamma,\lambda}(u)$, which allows us to rewrite this functional in terms of its one-dimensional version computed on all one-dimensional sections of the function $u$.

In order to state the formula, let us introduce some notation. For a function $u:\R^N\to\R$, a unit vector $\sigma \in \s^{N-1}$ and a point $z\in \sigma^\perp$ let $u_{\sigma,z}:\R\to\R$ be the one-dimensional function coinciding with the restriction of $u$ to the line parallel to $\sigma$ passing through $z$, namely the function
$$u_{\sigma,z}(t):=u(z+\sigma t) \qquad \forall t \in \R.$$

The result is the following.

\begin{lemma}\label{lemma:integral_geometric_repr}
Let $u:\R^N\to\R$ be a measurable function and let us fix $\gamma>0$ and $\lambda>0$.

Then it turns out that
$$F_{\gamma,\lambda}(u)=\frac{1}{C_1}\int_{\s^{N-1}}d\sigma \int_{\sigma^\perp} F_{\gamma,\lambda}(u_{\sigma,z})\,dz.$$
\end{lemma}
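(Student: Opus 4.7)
My plan is to carry out a change of variables that fibers the $\R^N\times\R^N$ integration defining $\nu_\gamma(E_{\gamma,\lambda}(u))$ over the set of affine lines in $\R^N$, parametrized by $(\sigma,z)$ with $\sigma\in\s^{N-1}$ and $z\in\sigma^\perp$. No new analytic idea is required; everything reduces to Fubini and the polar-coordinates change of variables.

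The first step is to substitute $y=x+h$ in the defining integral for $\nu_\gamma(E_{\gamma,\lambda}(u))$ and then pass to spherical coordinates $h=\rho\sigma$ with $\rho>0$ and $\sigma\in\s^{N-1}$, so that $dh=\rho^{N-1}\,d\rho\,d\sigma$; combined with the kernel $|y-x|^{\gamma-N}=\rho^{\gamma-N}$, this leaves the weight $\rho^{\gamma-1}$. The second step is, for each fixed $\sigma$, to decompose the remaining $x$-integration by writing $x=z+s\sigma$ with $z\in\sigma^\perp$ and $s\in\R$, so that Fubini gives $dx=dz\,ds$ and one recognizes $u(x)=u_{\sigma,z}(s)$ and $u(x+\rho\sigma)=u_{\sigma,z}(s+\rho)$. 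After the further substitution $t=s+\rho$, the condition $(x,y)\in E_{\gamma,\lambda}(u)$ becomes $|u_{\sigma,z}(t)-u_{\sigma,z}(s)|>\lambda(t-s)^{1+\gamma}$ with $t>s$, leading to
$$\nu_\gamma(E_{\gamma,\lambda}(u))=\int_{\s^{N-1}}d\sigma\int_{\sigma^\perp}dz\iint_{\substack{s<t\\(s,t)\in E_{\gamma,\lambda}(u_{\sigma,z})}}(t-s)^{\gamma-1}\,ds\,dt.$$

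The final step is to observe that both the set $E_{\gamma,\lambda}(u_{\sigma,z})$ and the kernel $|t-s|^{\gamma-1}$ are symmetric under $(s,t)\leftrightarrow(t,s)$, so that the inner double integral equals $\tfrac{1}{2}\nu_\gamma(E_{\gamma,\lambda}(u_{\sigma,z}))$; multiplying through by $\lambda$ and using $C_1=2$ yields the stated identity. I do not anticipate any genuine obstacle: the whole argument is bookkeeping, and the only point deserving care is tracking the factor $1/C_1=1/2$, which arises from the $(s,t)$-symmetry rather than from any double counting on $\s^{N-1}$, since polar coordinates $h=\rho\sigma$ with $\rho>0$ already cover $\R^N\setminus\{0\}$ exactly once.
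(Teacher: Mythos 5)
Your proposal is correct and follows essentially the same route as the paper: substitute to isolate the increment, pass to polar coordinates in the increment variable, foliate the base point over lines $z+\R\sigma$, and account for the factor $1/C_1=1/2$ via the $(s,t)$-symmetry of the one-dimensional kernel and level set (the paper phrases this as symmetrizing the $r$-integral before the final substitution, which is the same observation).
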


\begin{proof}
Let $\chi:\R^N\times \R^N\to \{0,1\}$ be the characteristic function of the set $E_{\gamma,\lambda}(u)$. Then we have that
$$F_{\gamma,\lambda}(u)=\int_{\R^N}dx\int_{\R^N}|y-x|^{\gamma-N}\chi(x,y)\, dy.$$

Setting first $y=x+\sigma r$, with $\sigma \in \s^{N-1}$ and $r\in (0,+\infty)$, and then $x=z+\sigma t$, with $z\in \sigma^\perp$ and $t\in\R$, we obtain that
\begin{eqnarray*}
F_{\gamma,\lambda}(u)&=&\int_{\R^N}dx\int_{\s^{N-1}}d\sigma \int_{0} ^{+\infty} r^{\gamma-N}\chi(x,x+\sigma r) r^{N-1}\, dr\\
&=&\int_{\s^{N-1}}d\sigma \int_{\sigma^\perp} dz \int_{\R} dt \int_{0} ^{+\infty} r^{\gamma-1}\chi(z+\sigma t,z+\sigma (t+r))\,dr\\
&=&\frac{1}{2}\int_{\s^{N-1}}d\sigma \int_{\sigma^\perp} dz \int_{\R} dt \int_{\R} |r|^{\gamma-1}\chi(z+\sigma t,z+\sigma (t+r))\,dr.
\end{eqnarray*}

Finally, setting $r=s-t$ and recalling that $C_1=2$ we obtain that
\begin{eqnarray*}
F_{\gamma,\lambda}(u)&=&\frac{1}{2}\int_{\s^{N-1}}d\sigma \int_{\sigma^\perp} dz \int_{\R} dt \int_{\R} |s-t|^{\gamma-1}\chi(z+\sigma t,z+\sigma s)\,ds\\
&=&\frac{1}{C_1}\int_{\s^{N-1}}d\sigma \int_{\sigma^\perp} F_{\gamma,\lambda}(u_{\sigma,z})\,dz.
\end{eqnarray*}
\end{proof}

The second result that we need to perform the sectioning argument is the following well-known lemma, that follows from the results in \cite[Section~3.11]{AFP}.

\begin{lemma}\label{lemma:sectioning}
Let $u\in \dot{BV}(\R^N)$. Then for every $\sigma \in \s^{N-1}$ it holds that $u_{\sigma,z}\in \dot{BV}(\R)$ for almost every $z \in \sigma^\perp$ and
\begin{gather*}
\int_{\s^{N-1}}d\sigma \int_{\sigma^\perp} \|D^a u_{\sigma,z}\|_\M\,dz=C_N\|D^a u\|_\M,\\[0.5ex]
\int_{\s^{N-1}}d\sigma \int_{\sigma^\perp} \|D^j u_{\sigma,z}\|_\M\,dz=C_N\|D^j u\|_\M,\\[0.5ex]
\int_{\s^{N-1}}d\sigma \int_{\sigma^\perp} \|D^c u_{\sigma,z}\|_\M\,dz=C_N\|D^c u\|_\M.
\end{gather*}
\end{lemma}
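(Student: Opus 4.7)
The plan is to reduce the $N$-dimensional statement to the corresponding one-dimensional slicing theorems for $BV$ functions from \cite[Section~3.11]{AFP} applied to a single fixed direction $\sigma$, and then to integrate over the sphere using a standard rotational invariance identity.

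First I would fix $\sigma\in\s^{N-1}$ and invoke the AFP slicing theorem (Theorem~3.107 and Theorem~3.108): it gives that $u_{\sigma,z}\in\dot{BV}(\R)$ for $\mathcal{H}^{N-1}$-a.e.\ $z\in\sigma^\perp$, and that the canonical decomposition of the one-dimensional derivative is inherited from the decomposition of $Du$. More precisely, writing $Du=\nu_u\,|Du|$ for the polar decomposition, with $\nu_u:\R^N\to\s^{N-1}$ the $|Du|$-measurable density, the identity
$$\int_{\sigma^\perp}\|D^\star u_{\sigma,z}\|_\M\,dz=\int_{\R^N}|\nu_u(x)\cdot\sigma|\,d|D^\star u|(x)$$
holds for each $\star\in\{a,j,c\}$, because slicing preserves the three parts separately (the absolutely continuous part of $Du_{\sigma,z}$ comes from $D^a u$, jumps of $u_{\sigma,z}$ come from points where the slice meets the jump set of $u$, and the Cantor part comes from $D^c u$, with the projection onto $\sigma$ accounting for the cosine factor $\nu_u\cdot\sigma$).

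Then I would integrate over $\sigma$ and swap integrals by Fubini/Tonelli to get
$$\int_{\s^{N-1}}d\sigma\int_{\sigma^\perp}\|D^\star u_{\sigma,z}\|_\M\,dz=\int_{\R^N}\left(\int_{\s^{N-1}}|\nu_u(x)\cdot\sigma|\,d\mathcal{H}^{N-1}(\sigma)\right)d|D^\star u|(x).$$
The inner integral depends only on the unit vector $\nu_u(x)$, and by rotational invariance of $\mathcal{H}^{N-1}$ on $\s^{N-1}$ it equals the constant $\int_{\s^{N-1}}|\sigma\cdot e_1|\,d\mathcal{H}^{N-1}(\sigma)=C_N$ from (\ref{defn:C_N}) for every choice of $\nu_u(x)$. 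Substituting yields all three claimed equalities simultaneously. The only real content is quoting the AFP slicing theorem in the form that respects the $a/j/c$ decomposition; once this is in hand, the remainder is just Fubini together with the spherical symmetry of $|\sigma\cdot\nu|$.
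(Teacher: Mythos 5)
Your argument is exactly the unpacking the paper has in mind: the paper gives no proof, simply stating that the lemma ``follows from the results in [AFP, Section~3.11],'' which are precisely the slicing theorems you invoke, and the remaining steps (Fubini plus the rotational invariance identity $\int_{\s^{N-1}}|\nu\cdot\sigma|\,d\mathcal H^{N-1}(\sigma)=C_N$ for any fixed unit vector $\nu$) are the standard way to pass from the single-direction slicing identity to the spherical average. The only caveat is to double-check the exact theorem numbers in AFP and to note that the density $\nu_u$ in the polar decomposition $Du=\nu_u|Du|$ agrees $|D^\star u|$-a.e.\ with the polar density of each part $D^\star u$ because the three parts are mutually singular, which is what makes your slicing identity hold for each $\star\in\{a,j,c\}$ separately.
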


We can now extend the proofs of our main results to the case $N>1$.

\begin{proof}[Proof of Theorem~\ref{thm:liminf}]
It is enough to apply consecutively Lemma~\ref{lemma:integral_geometric_repr}, Fatou lemma, the one-dimensional result and Lemma~\ref{lemma:sectioning}, to obtain that
\begin{eqnarray*}
\liminf_{\lambda\to +\infty} F_{\gamma,\lambda}(u)&=&
\liminf_{\lambda\to +\infty} \frac{1}{C_1}\int_{\s^{N-1}}d\sigma \int_{\sigma^\perp} F_{\gamma,\lambda}(u_{\sigma,z})\,dz\\
&\geq& \int_{\s^{N-1}}d\sigma \int_{\sigma^\perp}\frac{1}{C_1} \liminf_{\lambda\to +\infty} F_{\gamma,\lambda}(u_{\sigma,z})\,dz\\
&\geq& \int_{\s^{N-1}} d\sigma \int_{\sigma^\perp} \left(\frac{\|D^a u_{\sigma,z}\|_\M}{\gamma} + \frac{\|D^j u_{\sigma,z}\|_\M}{1+\gamma} + \frac{(2^\gamma-1)\|D^c u_{\sigma,z}\|_\M}{\gamma(1+\gamma)2^{2+\gamma}} \right)dz\\
&=& \frac{C_N}{\gamma} \|D^{a}u\|_\M+\frac{C_N}{1+\gamma}\|D^{j}u\|_\M+ \frac{C_N(2^\gamma-1)}{\gamma(1+\gamma)2^{2+\gamma}}\|D^{c}u\|_\M.
\end{eqnarray*}
\end{proof}

\begin{proof}[Proof of Theorem~\ref{thm:lim_SBV}]
From (\ref{est:sup_F}) and Lemma~\ref{lemma:sectioning} we deduce that
$$\int_{\s^{N-1}}d\sigma \int_{\sigma^\perp} \sup_{\lambda>0} F_{\gamma,\lambda} (u_{\sigma,z})\,dz \leq
\int_{\s^{N-1}}d\sigma \int_{\sigma^\perp} c_2 \|D u_{\sigma,z}\|_\M\,dz= c_2 C_N \|D u\|_\M<+\infty.
$$

Therefore, from Lemma~\ref{lemma:integral_geometric_repr}, the dominated convergence theorem, the one-dimensional result and Lemma~\ref{lemma:sectioning}, we deduce that
\begin{eqnarray*}
\lim_{\lambda\to +\infty} F_{\gamma,\lambda}(u)&=&
\lim_{\lambda\to +\infty} \frac{1}{C_1}\int_{\s^{N-1}}d\sigma \int_{\sigma^\perp} F_{\gamma,\lambda}(u_{\sigma,z})\,dz\\
&=& \int_{\s^{N-1}}d\sigma \int_{\sigma^\perp}\frac{1}{C_1} \lim_{\lambda\to +\infty} F_{\gamma,\lambda}(u_{\sigma,z})\,dz\\
&=& \int_{\s^{N-1}} d\sigma \int_{\sigma^\perp} \left(\frac{\|D^a u_{\sigma,z}\|_\M}{\gamma} + \frac{\|D^j u_{\sigma,z}\|_\M}{1+\gamma} \right)dz\\
&=& \frac{C_N}{\gamma} \|D^{a}u\|_\M+\frac{C_N}{1+\gamma}\|D^{j}u\|_\M.
\end{eqnarray*}
\end{proof}

\subsubsection*{\centering Acknowledgments}

The author wishes to thank prof. H.~Brezis for sending him preliminary versions of \cite{Brezis-Lincei,BSVY} and for encouraging him to investigate the open problems stated therein.

The author is a member of the \selectlanguage{italian} ``Gruppo Nazionale per l'Analisi Matematica, la Probabilità e le loro Applicazioni'' (GNAMPA) of the ``Istituto Nazionale di Alta Matematica'' (INdAM).

\selectlanguage{english}

The author acknowledges the MIUR Excellence Department Project awarded to the Department of Mathematics, University of Pisa, CUP I57G22000700001.

\label{NumeroPagine}

\end{document}